\newtheorem{thm}{Theorem}[section]
\newtheorem{cor}[thm]{Corollary}
\newtheorem{lem}[thm]{Lemma}
\newtheorem{defn}[thm]{Definition}
\theoremstyle{definition}
\def\<{\langle}
\def\>{\rangle}
\theoremstyle{remark}
\newenvironment{pfof}[1]{\par\medskip\noindent\textit{Proof of #1.}~}{\hfill $\square$\par\medskip}
\let\c@equation\c@thm
\numberwithin{equation}{section}
\date{\today}
\begin{document}
	\author[Chinyere]{I. Chinyere}
	\address{ Ihechukwu Chinyere\\
		Department of Mathematics\\ Michael Okpara University of Agriculture\\ Umudike\\
		P.M.B 7267 Umuahia\\ Abia State\\ Nigeria.}
	\email{ihechukwu@aims.ac.za}
	\address{
		African Institute for Mathematical Sciences (AIMS), Cameroon}
	\email{ihechukwu.chinyere@aims-cameroon.org}

	\title{Structure of words with short $2$-length in a free product of groups.}

	\begin{abstract}
			Howie and Duncan observed that a word in a free product with length at least two and which is not a proper power can be decomposed as a product of two cyclic subwords each of which is uniquely positioned. Using this property, they proved various important results about one-relator product of groups. In this paper, we show that similar results hold in a more general setting where we allow elements of order two.
	\end{abstract}
	\keywords{One-relator product, Unique position, Pictures}

	\maketitle
	
	\section{INTRODUCTION}
	Let $R$ be a cyclically reduced word which is not a proper power and has length at least two in the free group $F = F(X)$. In  \cite{We1}, Weinbaum showed that some cyclic conjugate of $R$ has a decomposition of the form $UV$, where $U$ and $V$ are non-empty cyclic subwords of $R$, each of which is uniquely positioned in $R$ i.e. occurs exactly once as a cyclic subword of $R$. Weinbaum also conjectured that $U$ and $V$ can be chosen  so that neither is a cyclic subword of $R^{-1}$. A stronger version of his conjecture was proved by Duncan and Howie \cite{dh}. In this paper, a cyclic subword is uniquely positioned if it is non-empty, occur exactly once as a subword of $R$ and does not occurs as a subword of $R^{-1}$.
	
	\medskip
	From now on $R$ is a word in the free product of groups $G_1$ and $G_2$, which is not a proper power and has length at least two. Before we can continue, we need to  define the notion of $n$-\textit{length} of a word. We do this in the special case when $n=2$ and the word is $R$, but of course the definition can be generalised for any $n>1$ and any word in a group.

	\medskip
	For each element $a$ of order $2$ involved in $R$, let $D(a)$ denote its number of occurrence in $R$. In other words suppose $R$ has free product length of $2k$ for some $k>0$. Then, $R$ has an expression of the form
	$$R=\prod_{i=1}^ka_{i}b_{i},$$
	with $a_i\in G_1$ and $b_i\in G_2$. If $a^2=1$, then we define $D(a)$ to be the cardinality of the set $\{i\in \{1,2,\cdots,k\}~|~a_i=a\}$.
	Denote by $\textbf{S}_{R}$ the symmetrized closure of $R$ in $G_1*G_2$ i.e. the smallest subset of $G_1*G_2$ containing $R$ which is closed under cyclic permutations and inversion. Since $D(a)$ is unchanged by replacing $R$ with any other element in $\textbf{S}_{R}$, we make the following definition.
	\begin{defn}
		The $2$-\textit{length} of $\textbf{S}_{R}$, denoted by $D_2(\textbf{S}_{R})$, is the maximum $D(a)$, such that $a$ is a letter of order $2$ involved in $R$.
	\end{defn}
    In this paper, we will be mostly concerned with the element $R'$  in $\textbf{S}_{R}$ of the form
	$$R'= \prod_{i= 1}^{D_2(\textbf{S}_{R})}aM_i,$$
	with $D(a) = D_2(\textbf{S}_{R})$ and $M_i\in G_1*G_2$. It follows that each $M_i$ has odd length (as a reduced but not cyclically reduced word in the free product) and does not contain any letter equal to $a$. When we use the  notation `$`=$" for words, it will mean identical equality. We will use $\ell()$ to mean then length of a reduced free product word which is not necessarily cyclically reduced. 
	
	\medskip
	As mentioned in the abstract, the authors of  \cite{dh} observed that in the case when $D_2(\textbf{S}_{R})=0$, the word $R$ can be decomposed as a product of two uniquely positioned subwords. Using that they showed that every minimal picture over a one-relator product with relator $R^3$ satisfies  $C(6)$, from which important results about the group were proved. In this paper we work in a more general setting where $D_2(\textbf{S}_{R})\leq 2$. It is no longer always possible that $R$ has a decomposition into uniquely positioned subwords. However, we can show that $R$ has a certain structure which allows us to obtain similar results.
This idea is captured in the following theorem.
	\begin{thm}\label{the1}
	Let $R$ be a word in a free product of length at least $2$ and which is not a proper power. Suppose also that $D_2(\textbf{S}_{R})\leq 2$.
	Then either a cyclic conjugate of $R$ has a decomposition of the form $UV$ such that $U$ and $V$ are uniquely positioned or one of the following holds:
	\begin{itemize}
		\item[(a)] $D_2(\textbf{S}_{R})=1$ and $R$ has a cyclic conjugate of the form $aXbX^{-1}$ or $aM$, where $a,b$ are letters of order $2$ and $M$ does not involve any letter of order $2$.
		\item[(b)] $D_2(\textbf{S}_{R})=2$ and $R$ has a cyclic conjugate of the form $aXbX^{-1}$ where $a$ is a letter of order $2$.
	\end{itemize} 
\end{thm}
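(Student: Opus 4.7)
Plan. I would split the argument on $D_2(\mathbf{S}_R)\in\{0,1,2\}$. The case $D_2=0$ is the original Duncan--Howie theorem \cite{dh}, which I would simply invoke. For the remaining two cases I would work with the canonical representative $R' = \prod_{i=1}^{D_2(\mathbf{S}_R)} aM_i$ in $\mathbf{S}_R$, and attempt to produce a cyclic rotation $R'' = UV$ of $R'$ with both $U$ and $V$ uniquely positioned; the exceptional forms (a) and (b) should arise precisely as the obstructions to such a decomposition.

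\medskip

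In the case $D_2=1$, the canonical form is $R' = aM$ with $a$ the unique letter of order $2$ appearing in $R$, occurring exactly once, and $M$ of odd length and $a$-free. The crucial point is that since $a=a^{-1}$, the inverse $R'^{-1} = M^{-1}a$ also contains $a$ exactly once cyclically. Consequently, for a cyclic subword of the form $U = aW$ to fail to be uniquely positioned, $W$ must coincide with the length-$|W|$ prefix of the cyclic word $M^{-1}$ (after matching the two occurrences of $a$). Scanning outward from $a$ and from the opposite end of $M$, I would argue that this mirror agreement either breaks at some point---yielding an explicit valid decomposition---or persists all the way across $M$. A full persistence forces either $M = XbX^{-1}$ for some $b$ with $b^2=1$, or $M$ having no letter of order $2$ at all, giving respectively the two alternatives listed in part (a).

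\medskip

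For $D_2=2$, write $R' = aM_1 aM_2$ with $a$ of order $2$ occurring twice and no other order-$2$ letter occurring more than twice. The natural candidate split is $U = aM_1$, $V = aM_2$. If these were equal as cyclic subwords, then $M_1=M_2$ and $R'$ would be a proper power, contrary to hypothesis. So the only remaining obstruction is that $aM_1$ (or symmetrically $aM_2$) occurs as a cyclic subword of $R'^{-1} = M_2^{-1}aM_1^{-1}a$. Anchoring the match at the two $a$-positions forces $M_1 = M_2^{-1}$, which gives $R' = aXaX^{-1}$ with $X = M_1$, i.e.\ case (b). If instead one tries a finer cut inside one of the $M_i$, I would apply a Duncan--Howie-style argument inside the $a$-free word $M_i$ to produce a valid decomposition unless the same rigid palindromic structure reappears.

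\medskip

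The step I expect to be the main obstacle is the subcase $D_2=1$ when $M$ contains additional letters of order $2$, each necessarily occurring exactly once. Each such letter contributes its own potential mirror obstruction to unique positioning, and the delicate book-keeping consists in showing that if every candidate cut fails, all these obstructions align consistently with the $a$-centred mirror and collapse the whole word to the rigid form $aXbX^{-1}$. Concretely, one must compare partial agreements of prefixes and suffixes of cyclic conjugates of $M$ with $M^{-1}$ and rule out incompatible partial agreements of different lengths coexisting; this combinatorial-on-words analysis is what I expect to be the technical core of the proof.
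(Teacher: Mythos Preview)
Your high-level case split and the use of the canonical representatives $R'=aM$ and $R'=aM_1aM_2$ match the paper's architecture. The $D_2=0$ reduction to Duncan--Howie is also what the paper does. However, in both non-trivial cases your argument has a genuine gap that the paper has to work to close.

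\medskip

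\textbf{The $D_2=2$ case.} Your claim that ``anchoring the match at the two $a$-positions forces $M_1=M_2^{-1}$'' is not correct. Anchoring $aM_1$ at the two $a$'s of $R'^{-1}=M_2^{-1}aM_1^{-1}a$ gives \emph{two} constraints, and one of them is $M_1=M_1^{-1}$, i.e.\ $M_1^2=1$, not $M_1=M_2^{-1}$. Moreover you are implicitly assuming $\ell(M_1)=\ell(M_2)$: when the lengths differ, $aM_1$ can match a proper prefix of $aM_2$ inside $R'$ itself, so failure of unique positioning no longer forces $M_1=M_2$. The paper's Lemma covering this case spends most of its effort precisely on the situation $M_i^2=1$ (with possibly unequal lengths): one writes $M_i=XpX^{-1}$ with $p^2=1$, \emph{recentres} the cyclic word on $p$ to get $R''=pMpN$, and then runs a second round of unique-positioning analysis on carefully chosen subwords of $R''$. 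That recentring trick is the missing idea in your sketch; the vague ``Duncan--Howie-style argument inside the $a$-free word $M_i$'' does not supply it, because the obstruction lives in how $M_i$ sits inside $M_j$, not inside $M_i$ alone.

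\medskip

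\textbf{The $D_2=1$ case.} Your scanning idea establishes when $U=aW$ is uniquely positioned, but says nothing about the complement $V$: since $V$ does not contain $a$, there is no anchor, and $V$ may well occur elsewhere in $M$ or in $M^{-1}$. So ``the mirror breaks'' does not by itself yield a $UV$ decomposition. The paper avoids this entirely: instead of scanning from $a$, it picks any \emph{second} order-$2$ letter $b$ in $M$, writes $M=XbY$ with $X\ne Y^{-1}$, and does a four-way case analysis on whether $X^2=1$ and whether $Y^2=1$. Each case produces an explicit uniquely positioned pair (sometimes after another recentring on the central letters of $X$ or $Y$), and the absence of a second order-$2$ letter is exactly the alternative $R=aM$ with $M$ free of order-$2$ letters. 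You correctly identified this subcase as the crux, but the paper's device of exploiting $b$ rather than $a$ as the organising centre is what makes it tractable.
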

 Note that in Theorem \ref{the1}, the requirement that $D_2(\textbf{S}_{R})\leq 2$ is optimal in the sense that there is no hope to obtain such result when $D_2(\textbf{S}_{R})> 2$. To see why this is true, consider the word $S=\prod_{i=1}^{n}ab_i$, with $a\in G_1$ and $b_i\in G_2$, $i=1,2,\cdots,n$. Suppose that  $b_i\neq b_j$ for $i\neq j$ and $a^2=b_i^2=1$ for $i=1,2,\cdots, n$. It is easy to verify that $D_2(\textbf{S}_{R})=n$ and Theorem \ref{the1} fails for $n>2$. In other words, $S$ does not have a decomposition into two uniquely positioned subwords, nor does it have a decomposition of the form $xXyX^{-1}$ such that $x^2=1$.
 
 \medskip
 Further analysis of the structure of $R$ leads us to the following theorem which is our main result in this paper.
 
 \begin{thm}\label{the2}
 Let $R$ satisfy the conditions of Theorem \ref{the1}. Then either any minimal picture over $G$ satisfies  $C(6)$ or $R$ has the form (up to cyclic conjugacy) $aXbX^{-1}$ with $a^2=1\neq b^2$.
 \end{thm}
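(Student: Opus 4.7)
My plan is to deduce Theorem~\ref{the2} from Theorem~\ref{the1} through a case analysis governed by the three possible outcomes of that classification. Throughout I fix a minimal picture $\Gamma$ over the group $G$ and work to rule out interior regions of $\Gamma$ with fewer than six corners, which is the content of $C(6)$. At any such putative short region, the labels on the surrounding discs contribute subwords of elements of $\textbf{S}_R$ whose cancellations must fit together consistently; the structural information about $R$ furnished by Theorem~\ref{the1} is what forces these configurations to contradict the minimality of $\Gamma$, except in the exceptional family identified in the statement.

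In the first outcome of Theorem~\ref{the1}, where a cyclic conjugate of $R$ factors as $UV$ with both $U$ and $V$ uniquely positioned, the $C(6)$ argument of Duncan and Howie~\cite{dh} applies essentially unchanged, since their proof uses the unique positioning of the factors rather than the specific hypothesis $D_2(\textbf{S}_R) = 0$. The remaining outcomes -- both branches of part (a) and all of part (b) -- are intrinsically different: whenever an involution $a$ occurs in $R$ it also occurs in $R^{-1}$, so no cyclic subword containing $a$ is uniquely positioned in the sense of the paper, and in particular the symmetric words $aM$ and $aXbX^{-1}$ admit no such decomposition. Consequently, in these cases one cannot reduce to the first outcome and must argue directly with the picture.

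For each structured case I plan to enumerate the combinatorial types of interior regions of $\Gamma$ with three, four or five corners, translate each type into a system of cancellation equations among copies of $a$, $b$, $X$, $X^{-1}$ (or among $a$ and subwords of $M$ in the $aM$ case), and use the bound $D_2(\textbf{S}_R) \leq 2$ -- which severely restricts how $X$ and $X^{-1}$ can overlap, and forbids repeated involutions inside $M$ -- to exhibit an explicit bridge move simplifying $\Gamma$ and contradicting minimality. For $R = aM$, for $R = aXbX^{-1}$ with $D_2(\textbf{S}_R) = 1$, and for $R = aXbX^{-1}$ with $D_2(\textbf{S}_R) = 2$ and $b^2 = 1$, the involutions on either side of the internal word provide the extra cancellation needed to force such a bridge move in every configuration. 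When instead $b^2 \neq 1$, one configuration genuinely survives every bridge move, and this is the exceptional family of the theorem.

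The principal obstacle I anticipate is the enumeration of short regions in the near-palindromic case $R = aXbX^{-1}$ with $b^2 = 1$ and $D_2(\textbf{S}_R) = 2$. Because $R$ equals $R^{-1}$ cyclically whenever both $a$ and $b$ are involutions, the analysis cannot rely on any uniqueness statement for subwords and must instead track, configuration by configuration, how overlaps of $X$ with itself or with $X^{-1}$ interact with the involutions on either side, and how the single repeated involution permitted in $X$ by the $D_2(\textbf{S}_R) \leq 2$ hypothesis is placed. Isolating the additional cancellation that is available exactly when $b^2 = 1$, and absent when $b^2 \neq 1$, is the technical crux and simultaneously explains why the exception in Theorem~\ref{the2} cannot be removed.
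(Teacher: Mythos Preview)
Your plan contains a genuine misconception that sends you down a much harder road than necessary. You assert that because an involution $a$ occurs in both $R$ and $R^{-1}$, ``no cyclic subword containing $a$ is uniquely positioned \ldots\ and in particular the symmetric words $aM$ and $aXbX^{-1}$ admit no such decomposition.'' This is false: the single letter $a$ is not uniquely positioned, but a longer subword $aW$ can be. For instance, if $R=axy$ with $a^2=1$, $x,y,x^{-1},y^{-1}$ pairwise distinct and none of order $2$, then $U=ax$ and $V=y$ are both uniquely positioned. The paper exploits exactly this: Lemma~\ref{lems4} shows that when $D_2(\textbf{S}_R)=1$ and $R=aM$ with $M$ free of order-$2$ letters, one \emph{does} obtain a uniquely positioned decomposition $UV$ unless $M=XbX^{-1}$ for a single letter $b$ --- and since $M$ contains no involutions, that residual case is precisely the exceptional form $aXbX^{-1}$ with $a^2=1\neq b^2$. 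So the $aM$ branch feeds straight back into the Duncan--Howie argument; no separate enumeration of short configurations is needed.

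For the remaining branch $R=aXbX^{-1}$ with $a^2=b^2=1$, the paper again avoids any case-by-case enumeration. Lemma~\ref{lems3} is a one-line pigeonhole: the label of an interior vertex $v$ contains $2m\ge 6$ occurrences of $a$ or $b$, so if $v$ has fewer than six zones some zone must cross one of them; since $a=a^{-1}$ and $b=b^{-1}$, the labels of $v$ and its neighbour then match up completely after bridge moves, producing a cancelling pair. Your proposed exhaustive analysis of three-, four- and five-corner configurations is therefore unnecessary, and your description of it is also misstated: $C(6)$ in this paper is a condition on interior \emph{vertices} (each meeting at least six zones), not on interior \emph{regions}; the region labels lie in a single factor $G_i$ and do not decompose into ``copies of $a,b,X,X^{-1}$'' as you suggest. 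Correcting these two points --- the false impossibility claim for $aM$, and the vertex/region confusion --- collapses your programme to the paper's short proof via Lemma~\ref{lems4}, Lemma~\ref{lems3}, and Corollary~\ref{cor2}.
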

 
 The rest of the paper is arranged as follows. We begin in Section $2$ by providing some literature on related results. We also recall only the basic ideas about pictures. In Section $3$ we prove a number of Lemmas about word combinatorics and pictures. In particular we deduce Theorem \ref{the1}. Furthermore, these Lemmas are applied in Section $4$ to prove the main result and deduce a number of applications.

	\section{PRELIMINARIES}
	
Let $G_1$ and $G_2$ be nontrivial groups and $w\in G_1*G_2$ a cyclically reduced word. Let $G$ be the quotient of the free product $G_1*G_2$ by the normal closure of $w$, denoted $N(w)$. Then $G$ is called a one-relator product and denoted by
$$G= (G_1*G_2)/N(w).$$
We refer to $G_1,G_2$ as the factors of $G$, and $w$ as the relator. For us, $w = R^m$ such that $R$ is a cyclically reduced word which is not proper power and  $m\geq 3$. If $m\geq 4$, a number of results were proved in \cite{jh1,jh2,jh3}, about $G$.
These results were also proved in \cite{dh} when $m= 3$ but not without the extra condition that $R$ involves no letter of order $2$. We also mention that the case when $m= 2$ is largely open. For partial results in this case see \cite{fhr,Ih1,Ih2}. The aim of this paper is to extend the result in \cite{dh} by allowing letters of order $2$ in $R$. Also we require results about pictures over $G$, in particular the fact that $R^m$ satisfies the small cancellation condition $C(2m)$ when $R$ has a certain form. Pictures can be seen as duals of van Kampen diagrams and have been widely used to prove results about one-relator groups and one-relator products. Below, we recall only basic concepts on pictures over a one-relator product as given in \cite{Ih1}. For more details, the reader can see \cite{jh1,jh2,jh3,dh,Ih2}.

\subsection{PICTURES}\label{sec4}
A \textit{picture} $\Gamma$ over $G$ on an oriented surface $\Sigma$ is made up of the following data:
\begin{itemize}
	\item a finite collection of pairwise 
	disjoint 
	closed discs in the interior of $\Sigma$ called \textit{vertices};
	
	\item a finite collection of disjoint closed arcs called \textit{edges}, each of which is either:  a simple closed arc in the interior of $\Sigma$ meeting no vertex of $\Gamma$, a simple arc joining two vertices (possibly same one) on $\Gamma$, a simple arc joining a vertex  to the boundary $\partial \Sigma$ of $\Sigma$, a simple arc joining $\partial \Sigma$ to $\partial \Sigma$;

	\item a collection of \textit{labels} (i.e words in $G_1\cup  G_2$), one for each corner of each \textit{region} (i.e connected component of the complement in $\Sigma$ of  the 
	union  of vertices and arcs of $\Gamma$) at a vertex and one along each component of the intersection of the region with $\partial \Sigma$. For each vertex, the label around it spells out the word $R^{\pm m}$ (up to cyclic permutation) in the clockwise order as a cyclically reduced word in $G_1 * G_2$. We call a vertex \textit{positive} or \textit{negative} depending on whether the label around it is $R^{ m}$ or $R^{-m}$ respectively.
\end{itemize}

For us $\Sigma$ will either be the $2$-sphere $S^2$ or $2$-disc $D^2$. A picture on $\Sigma$ is called \textit{spherical} if either $\Sigma=S^2$ or $\Sigma=D^2$ but with no arcs connected to $\partial {D^2}$. If 
$\Gamma$ is not spherical, $\partial {D^2}$ is one of the boundary components of a non-simply connected region (provided, of course, that $\Gamma$ contains at least one vertex or arc), which is called the \textit{exterior}. All other regions are called \textit{interior}.

\medskip
We shall be interested mainly in \textit{connected} pictures.
A picture is \textit{connected} if the union of its vertices and arcs is connected. In particular, no arc of a connected picture is a closed arc or joins two points of $\partial \Sigma$, unless the picture consists only of that arc.  In a connected picture, all  interior
regions $\bigtriangleup$ of $\Gamma$ are simply-connected, i.e topological discs. Just as in the case of vertices, the label around each region -- read {\em anticlockwise} -- gives a word which in a connected picture is required to be trivial in $G_1$ or $G_2$. Hence it makes sense to talk of $G_1-$regions or $G_2-$regions. Each arc is required to separate a $G_1-$region from a $G_2-$region. This is compatible with the alignment of regions around a vertex, where the labels spell a cyclically reduced word, so must come alternately from $G_1$ and $G_2$. 

\medskip
A vertex is called \textit{exterior} if it is possible to join it to the \textit{exterior} region by some arc without intersecting any  arc of $\Gamma$, and \textit{interior} otherwise. For simplicity we will indeed assume from this point that our $\Sigma$ is either $S^2$ or $D^2$. It follows that reading the label round any \textit{interior} region spells a word which is trivial in $G_1$ or $G_2$. The \textit{boundary label} of $\Gamma$ on $D^2$ is a word obtained by reading the \textit{labels} on $\partial D^2$ in an \textit{anticlockwise} direction. This word (which we may be assumed to cyclically reduced in $G_1 * G_2$) represents an identity element in $G$. In the case where $\Gamma$ is spherical, the \textit{boundary label} is an element in $G_1$ or $G_2$ determined by other labels in the \textit{exterior} region. 

\medskip
Two distinct vertices of a picture are said to  \emph{cancel} along an arc $e$ if they are joined by $e$ and if their labels, read from the endpoints of $e$, are mutually inverse
words in $G_1 * G_2$. Such vertices can be removed from a picture via a sequence of \textit{bridge moves} (see Figure \ref{bridge} and  \cite{dh} for more details), followed by deletion of a \textit{dipole} without changing the boundary label. A \textit{dipole} is a connected spherical sub-picture that contains precisely two vertices, does not meet $\partial \Sigma$, and such that none of its interior regions contain other components of $\Gamma$. This gives an alternative picture with the same boundary label and two fewer vertices. 

\begin{figure}[h!]
	\includegraphics[scale=0.5]{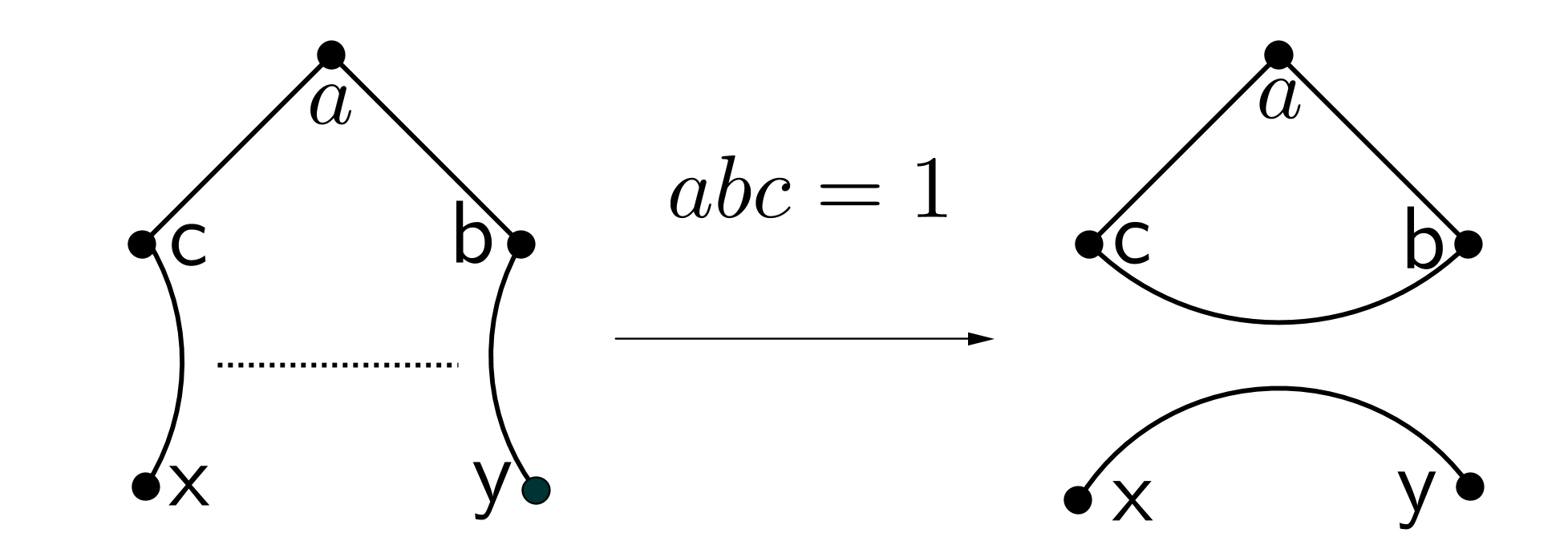} 
	\caption{\textit{Diagram showing bridge-move.}}
	\label{bridge}
\end{figure}
\medskip
We say that a picture $\Gamma$ is \textit{reduced} if it cannot be altered by bridge moves to a picture with a pair of cancelling vertices. 
A picture $\Gamma$ on $D^2$ is  \textit{minimal} if it is non-empty and has the minimum number of vertices 
amongst all pictures over $G$ with the same boundary label as $\Gamma$.
Clearly  minimal pictures are reduced.
Any cyclically reduced word in $G_1 * G_2$ representing the identity element of $G$ occurs as the boundary label of some reduced picture on $D^2$.

\begin{defn}\label{rmk}
	Let $\Gamma$ be 
	a picture  over $G$. 
	Two arcs of $\Gamma$ are said to be \textit{parallel} if they are the only two arcs in the boundary of some simply-connected region $\bigtriangleup$ of $\Gamma$. 
\end{defn}
We will also use the term \textit{parallel} to denote the equivalence relation generated by this relation, and refer to any of the corresponding equivalence classes as a \textit{class of $\omega$ parallel arcs} or \textit{$\omega$-zone}. Given a \textit{$\omega$-zone} 
with $\omega>1$ 
joining vertices $u$ and $v$ of $\Gamma$, consider the $\omega- 1$ two-sided regions separating these arcs. Each such region has a corner label $x_{_u}$ at $u$ and a corner label $x_{_v}$ at $v$, and the picture axioms imply that $x_{_u}x_{_v} = 1$ in $G_1$ or $G_2$. The $\omega -1$ corner labels at $v$ spell a cyclic subword $s$ of length $\omega-1$ of the label of $v$. Similarly the corner labels at $u$ spell out a cyclic subword $t$ of length $\omega -1$ of the label of $u$. Moreover, $s=t^{-1}$. If we assume that $\Gamma$ is reduced, then $u$ and $v$ do not cancel. 
In the spirit of small-cancellation-theory, we refer to 
$t$ and $s$ 
as 
\textit{pieces}. 

\medskip

As in graphs, the \textit{degree} of a vertex in $\Gamma$ is the number of \textit{zones} incident on it. For a region, the \textit{degree} is the number corners it has. We say that a vertex $v$ of $\Gamma$ satisfies the (local) $C(m)$ condition if it is joined to at least $m$ \textit{zones}. We say that $\Gamma$ satisfies $C(m)$ if every interior vertex satisfies $C(m)$.

\section{TECHNICAL RESULTS}
		
		In this section we give a number of results on the structure of $R$ when $D_2(\textbf{S}_{R})\leq 2$, from which Theorem \ref{the1} follows. It is assumed throughout that no element of $\textbf{S}_{R}$ has the form $UV$, where $U$ and $V$ are both uniquely positioned. In particular if $D(a)\geq 2$, there exists at most one $i\in \{1,2,\cdots,D(a)\}$ such that $M_i$ uniquely positioned in the decomposition $R=\prod_{i=1}^{D(a)}aM_i$. We begin with the proof of part(a) of Theorem \ref{the1}.

	\begin{lem}\label{lems1}
		If $D_2(\textbf{S}_{R})=1$, then $R$ has a cyclic conjugate of the form $aM$ or $aXbX^{-1}$, where $a,b$ are letters of order $2$ and $M$ does not involve any letter of order $2$.
	\end{lem}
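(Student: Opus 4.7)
The plan is to split according to how many distinct letters of order $2$ appear in $R$. Since $D_2(\textbf{S}_R)=1$, each such letter occurs exactly once in $R$. If only one letter $a$ of order $2$ appears, then up to cyclic conjugacy $R=aM$ with $M$ involving no letter of order $2$, giving the first desired form. From now on I assume that $R$ involves at least two distinct letters of order $2$, choose two cyclically adjacent such letters $a,b$, and (after cyclic conjugation) write
\[
R=aXbY,
\]
with $X$ containing no letter of order $2$ (though $Y$ may); the goal reduces to showing $Y=X^{-1}$.

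The main observation is to examine the decomposition $R=(aXb)\cdot Y$. Because $a$ and $b$ each occur exactly once in $R$, the subword $aXb$ appears in $R$ exactly once. In $R^{-1}=Y^{-1}bX^{-1}a$, the unique occurrences of $a$ and $b$ sit at forward cyclic distance $\ell(Y)+1$, so $aXb$ can embed as a cyclic subword of $R^{-1}$ only if $\ell(X)=\ell(Y)$, and a letter-by-letter match in that case forces $X=Y^{-1}$. Hence $aXb$ is uniquely positioned precisely when $Y\neq X^{-1}$.

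Assume for contradiction $Y\neq X^{-1}$, so $aXb$ is uniquely positioned. The standing hypothesis (no element of $\textbf{S}_R$ splits as $UV$ with both factors uniquely positioned) then forces $Y$ itself to fail to be uniquely positioned. Applying the same distance analysis to $R=(aX)\cdot(bY)$ and to the cyclic conjugate $R'=(bYa)\cdot X$ shows that $bYa$ is uniquely positioned unless $Y=X^{-1}$, so under our assumption $X$ also fails to be uniquely positioned, and at least one of $aX,bY$ fails too. Taking $\ell(X)\geq\ell(Y)$ without loss of generality, unwinding the latter yields the overlap identity $X=X_1Y^{-1}$ with $X_1$ non-empty, whence $R^{-1}=Y^{-1}bYX_1^{-1}a$.

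The contradiction now comes in two sub-cases, the second being the main obstacle. (i) If $Y$ contains a further letter $c$ of order $2$, say $y_j=c$, then the expression $R^{-1}=Y^{-1}bYX_1^{-1}a$ displays $c$ at two distinct positions --- once inside the $Y^{-1}$ block (as $y_j^{-1}=c$) and once inside the embedded $Y$ block (as $y_j=c$) --- which directly contradicts the fact that $c$ occurs exactly once in $R^{-1}$. (ii) If $Y$ contains no letter of order $2$, one refines the second-paragraph argument: for every non-trivial split $Y=Y_1Y_2$, the subword $aXbY_1$ can embed in $R^{-1}$ only when $\ell(X)=\ell(Y)$, so every such $aXbY_1$ is uniquely positioned and hence every non-trivial proper suffix $Y_2$ of $Y$ fails to be uniquely positioned; combining this strong constraint with the identity $X=X_1Y^{-1}$ via a periodicity argument on the order-$2$-free factor $X$ exhibits a cyclic conjugate of $R$ admitting a decomposition into two uniquely positioned subwords, contradicting the standing hypothesis. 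Either way $Y=X^{-1}$; since $X^{-1}=Y$ has no letter of order $2$, the letters $a$ and $b$ are the only order-$2$ letters of $R$, and $R$ has the announced form $aXbX^{-1}$.
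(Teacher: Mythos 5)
Your opening reduction (choosing two cyclically adjacent order-$2$ letters so that $R=aXbY$ with $X$ free of order-$2$ letters) and your ``distance'' criteria are correct: since $a$ and $b$ each occur exactly once in $R^{\pm1}$, the subwords $aXb$ and $bYa$ can occur in $R^{-1}$ only when $\ell(X)=\ell(Y)$, in which case $Y=X^{-1}$. The proof breaks, however, at ``Taking $\ell(X)\geq\ell(Y)$ without loss of generality.'' Your decomposition is asymmetric by construction: $X$ is the order-$2$-free gap between the adjacent letters, while $Y$ is allowed to contain further letters of order $2$, so interchanging the roles of $(a,X)$ and $(b,Y)$ to force $\ell(X)\geq\ell(Y)$ destroys exactly the property you use later. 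In the uncovered case $\ell(X)<\ell(Y)$ the overlap identity comes out the other way: there $bY$ and $Ya$ are uniquely positioned (the letter at distance $\ell(X)+1$ after $b$, resp.\ before $a$, in $R^{-1}$ is $a$, resp.\ $b$, and $a,b\notin Y$), hence $aX$ and $Xb$ are not, which gives that $X^{-1}$ is an initial and a terminal segment of $Y$ --- not $X=X_1Y^{-1}$. Then your sub-case (i) contradiction evaporates: a further order-$2$ letter $c$ of $Y$ now sits in the interior of $Y$ (e.g.\ $R=aXbY_1X^{-1}$ with $c$ in $Y_1$), occurs only once, and violates neither $D_2(\textbf{S}_{R})=1$ nor the order-$2$-freeness of $X$. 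This configuration --- the extra order-$2$ letters lying in the longer complementary piece --- is precisely the hard part of the lemma; the paper's proof spends most of its effort there, splitting according to whether $X$ and $Y$ are themselves order-$2$ words ($X^2=1$, $Y^2=1$), using a rearrangement around the two central order-$2$ letters in one case and, when $X^2\neq1=Y^2$, the fact that $Y=Y^{-1}$ as words to upgrade ``$X^{-1}$ is an initial and terminal segment of $Y$'' to $X^2=1$. Nothing in your write-up substitutes for these arguments.

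A secondary issue is that sub-case (ii) is only a sketch: the decisive step (``a periodicity argument \ldots exhibits a cyclic conjugate of $R$ admitting a decomposition into two uniquely positioned subwords'') is asserted, not carried out. It is in fact not needed in that generality: when neither $X$ nor $Y$ involves a letter of order $2$ the swap of $X$ and $Y$ \emph{is} legitimate, and one can argue directly that if $\ell(Y)<\ell(X)$ then $bYa$ and $X$ are both uniquely positioned (any occurrence of $X$ in $R^{\pm1}$ avoids $a,b$, so must lie in the arc $X^{\pm1}$ or $Y^{\pm1}$; the latter is too short, and $X$ inside $X^{-1}$ would force a central order-$2$ letter), while if $\ell(X)=\ell(Y)$ then $aX$ and $bY$ are both uniquely positioned; either way the standing hypothesis is contradicted. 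So the genuine gap you must close is the case $\ell(X)<\ell(Y)$ with $Y$ containing additional order-$2$ letters, where your current argument proves nothing.
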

	
	\begin{proof}
	Since $D_2(\textbf{S}_{R})=1$, we can assume without loss of generality that $R=aM$, where $M$ is a word in $G_1*G_2$ which does not involve $a$. We now proceed to show that either $M$ does not involve any  letter of order $2$ or $M$ can be decomposed in the form $XbX^{-1}$, where $b\in G_1\cup G_2$  is a letter of order $2$ and $X$ is a (possibly empty) word in $G_1*G_2$.
	
	\medskip
	Suppose by contradiction that $M$ has a decomposition of the form $XbY$ with $b^2=1$ and $X\neq Y^{-1}$. Note that we can assume without loss of generality that $0<\ell(X)<\ell(Y)$. Clearly, if $\ell(X)=\ell(Y)>0$, then both $aX$ and $bY$ are uniquely positioned which is a contradiction. There is nothing to prove if $\ell(X)=\ell(Y)=0$. Also if $\ell(X)= 0\neq \ell(Y)$, we get a contradiction since $ab$ and $Y$ will be uniquely positioned. Hence the inequality $0<\ell(X)<\ell(Y)$ holds.
	
	\medskip
	Suppose that $X^2=1$ and $Y^2=1$ holds simultaneously. Then by setting $X=X_1pX_1^{-1}$ and $Y=Y_1^{-1}qY_1$, where $X_1,Y_1\in G_1*G_2$ and $p,q$ are distinct letters of order $2$ in $G_1\cup G_2$, we can replace $R$ with
	$$R'=pX'qY',$$
	where $X'=(Y_1bX_1)^{-1}$ and $Y'=Y_1aX_1$. Since $a\neq b$, we have that $X'\neq Y'^{-1}$. Given  that $\ell(X')=\ell(Y')$, we easily conclude that $pX'$ and $qY'$ are uniquely positioned. This is a contradiction.	
	
\medskip
Suppose that  $X^2=1\neq Y^2$. By the assumption that $D_2(\textbf{S}_{R})=1$, we know that $X$ can not be equal to a segment of $Y$. Hence $aX$ and $bY$ are both uniquely positioned. This is a contradiction. Similarly, suppose that $X^2\neq 1=Y^2$.  Since $\ell(X)<\ell(Y)$ and  $D_2(\textbf{S}_{R})=1$, we have that both $bY$ and $Ya$ are uniquely positioned. Hence, neither $aX$ nor $Xb$ is uniquely positioned. This means that $X^{-1}$ is identically equal to an initial and a terminal segment of $Y$. Therefore, $X^2=1$. This is a contradiction. 

\medskip
Finally if $X^2\neq 1\neq Y^2$, then $aXb$ and $Y$ are both uniquely positioned. This contradiction completes the proof.
\end{proof}

	\begin{lem}\label{lems2}
	Suppose that $D_2(\textbf{S}_{R})=2$. Then $R$ has a cyclic conjugate of the form $aXbX^{-1}$ where $a$ is a letter of order $2$.
	\end{lem}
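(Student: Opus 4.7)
The plan is to argue by contradiction with the standing assumption: show that, unless $R$ has a cyclic conjugate of the stated form, some element of $\mathbf{S}_R$ admits a decomposition into two uniquely positioned subwords. Write $R$ (up to cyclic conjugacy) as $R=aM_1\,a\,M_2$, where $a$ is the order-$2$ letter with $D(a)=D_2(\mathbf{S}_R)=2$ and $M_1,M_2$ are odd-length words not involving $a$; without loss of generality $\ell(M_1)\le\ell(M_2)$. Since $R$ is not a proper power, $M_1\ne M_2$; and if $M_1=M_2^{-1}$ then $R=aM_1\,a\,M_1^{-1}$ is already of the required form with $X=M_1$ and $b=a$, so we are done. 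Assume henceforth $M_1\notin\{M_2,M_2^{-1}\}$.

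The key step is to analyse the candidate decomposition $R=(aM_1a)\cdot M_2$. Because $a$ occurs in $R$ and in $R^{-1}$ exactly twice, I can compare the four candidate starting positions for each factor. Specifically, $aM_1a$ is uniquely positioned iff $M_1\ne M_1^{-1}$: the length assumption $\ell(M_1)\le\ell(M_2)$ and the absence of $a$ in $M_2$ preclude a second occurrence of $aM_1a$ in $R$, while the two occurrences in $R^{-1}$ read $aM_1^{-1}a$ and $aM_2^{-1}a$, of which the second is excluded by $M_1\ne M_2^{-1}$. Similarly, $M_2$ is uniquely positioned iff $M_2\ne M_2^{-1}$: since $M_2$ contains no $a$, any cyclic occurrence must sit in a non-$a$ stretch, and the length assumption forces the only non-trivial candidate to be $M_2$ inside the $M_2^{-1}$-stretch of $R^{-1}$. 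Hence if $M_1\ne M_1^{-1}$ and $M_2\ne M_2^{-1}$, then $(aM_1a)\cdot M_2$ is a UP decomposition, contradicting the standing assumption; so one of $M_1=M_1^{-1}$ or $M_2=M_2^{-1}$ must hold.

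It suffices to treat $M_1=M_1^{-1}$ (the case $M_2=M_2^{-1}$ follows from the symmetric argument after interchanging the shorter and longer segment). The odd length of $M_1$ combined with the alternation of factors in $G_1*G_2$ forces the middle letter $b$ of $M_1$ to satisfy $b^2=1$, so $M_1=YbY^{-1}$. I then pass to the cyclic conjugate $R''=bY^{-1}aM_2aY$ and repeat the above UP analysis with $b$ in the role of $a$, splitting further according to whether $D(b)=1$ (so $b\notin M_2$) or $D(b)=2$ (so $b$ also occurs once in $M_2$). In each subcase, either a suitably chosen decomposition of $R''$ produces a UP pair (contradiction), or the analogous non-UP conditions force a reflective symmetry on $Y^{-1}aM_2aY$ that puts $R''$ in the form $bX'b'X'^{-1}$, giving the desired cyclic form for $R$.

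The main obstacle is this second stage: after reducing to $M_1=YbY^{-1}$, one must identify which letter plays the role of the leading letter in the final $aXbX^{-1}$ form, and then verify by a careful length/parity count that the inverse-prefix coincidences forced by the non-UP conditions on $R''$ indeed assemble correctly around the chosen middle letter. The subcase $D(b)=2$ is the most delicate, since then both $a$ and $b$ attain the maximum count in $R$ and the argument must track the interaction of these two pairs of order-$2$ letters simultaneously before concluding that $M_2$ is itself forced into a reflective form compatible with $Y$.
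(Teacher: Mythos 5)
Your first stage is sound and matches the paper's key observation: with $R=aM_1aM_2$, $M_1\neq M_2^{\pm 1}$ and $\ell(M_1)\le\ell(M_2)$, if neither $M_1$ nor $M_2$ equals its own inverse then $aM_1a$ and $M_2$ are both uniquely positioned, contradicting the standing assumption; so some $M_i$ is ``palindromic'' and, being of odd length, has the form $YbY^{-1}$ with $b^2=1$. (The paper phrases this as: both $M_i^2\neq 1$ is impossible, and handles $M_1^2=M_2^2=1$ and $M_1=M_2^{-1}$ exactly as you do.) The problem is that this reduction is the easy part of the lemma, and everything after it in your proposal is a plan rather than a proof. Passing to $R''=bY^{-1}aM_2aY$ and ``repeating the above UP analysis with $b$ in the role of $a$'' does not literally apply: when $D(b)=1$ the configuration around $b$ is a single occurrence of $b$ with the doubled letter $a$ sitting inside the complementary word, which is a genuinely different situation from the one you analysed (and also different from the $D_2=1$ situation of Lemma \ref{lems1}, whose arguments use $D_2(\textbf{S}_{R})=1$ essentially). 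What has to be shown is that the failure of unique position for the relevant subwords forces $M_2=PdP^{-1}$ (symmetry of $M_2$ about its middle letter) or yields a UP pair, and you only assert that ``the analogous non-UP conditions force a reflective symmetry,'' explicitly deferring the verification. That verification is precisely the bulk of the paper's proof: in the equal-length case the analysis of the four subwords $U=aYq$, $U'=qZa$, $V=ZaXpX^{-1}$, $V'=XpX^{-1}aY$; in the unequal-length case the claim that exactly one of $aY$, $Za$ is UP, the deduction that $M_i$ is an initial or terminal segment of $M_j$ (so $M_j=M_iW$ or $WM_i$), the passage to $R'=pMpN$, the forced identity $(X^{-1}W)^2=1$ giving $W=SxS^{-1}X$, and the construction of the uniquely positioned words $W_1$, $W_2$ producing the final contradiction.

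A second, smaller gap: your claim that the case $M_2=M_2^{-1}$ ``follows from the symmetric argument after interchanging the shorter and longer segment'' is not justified. The hypothesis $\ell(M_1)\le\ell(M_2)$ breaks the symmetry between the two cases, and in the paper the unequal-length case (where the palindromic word may be the shorter one, forcing it to embed as a segment of the longer one) requires a substantially different and more delicate argument than the equal-length case. As written, your proposal establishes the correct first reduction but leaves the core combinatorial work, including its hardest subcases, unproved.
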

	
	\begin{proof}
		Since $D_2(\textbf{S}_{R})=2$, we can assume without loss of generality that $$R = aM_1aM_2,$$ where $M_1,M_2\in G_1*G_2$, and neither involves the letter $a$. By assumption $M_1$ and $M_2$ can not be uniquely positioned simultaneously. If  $M_1^2=1$ and $M_2^2=1$ hold simultaneously, then by replacing $R$ with a cyclic conjugate, it can be shown that $R$ has the desired form.  Without loss of generality, we can assume that $1\leq \ell(M_1)\leq \ell(M_2)$. 
		
		\medskip
		Suppose that $\ell(M_1)=\ell(M_2)$. We can not have $M_1= M_2$ since $R$ is not a proper power. Also if $M_1= M_2^{-1}$, then there is nothing to prove. So we assume without loss of generality that $M_1^2=1$ and $ M_2$ is uniquely positioned. If $\ell(M_1)=1$, then there is nothing to prove since $M_1$ has order $2$ and so $R$ has the desired form. Hence we assume that $\ell(M_1)= \ell(M_2)\geq 3$. Let $M_1=XpX^{-1}$ and $M_2=YqZ$, with  $p,q\in G_1\cup G_2$, $p^2=1$, $\ell(Y)=\ell(Z)$ and $Y\neq Z^{-1}$ (as otherwise there is nothing to prove). Then
		$$R=aXpX^{-1}aYqZ.$$		
		Set 
		 \begin{align*}
		 	U= & aYq, \ \  \qquad\qquad U'=qZa,\\
		 	V= & ZaXpX^{-1},\qquad V'=XpX^{-1}aY.
		 \end{align*}
		 Clearly, $V^2\neq 1\neq V'^2$ since $D(a)=2$. Also since $Y\neq Z^{-1}$, it follows that $V$ and $V'$ are simultaneously uniquely positioned. Hence neither $U$ nor $U'$ is uniquely positioned. It is easy to see that this means that $U^2=1$ or $U'^2=1$ or $U'=U^{\pm 1}$. However, any such occurrence will imply that $a=q$ or $Y=Z^{-1}$. This is a contradiction.
			
\medskip		
Now suppose that $\ell(M_i)\neq \ell(M_j)$, where  $i,j\in \lbrace 1,2\rbrace$ with $i\neq j$. Note that it is not possible to have $M_i^2\neq 1$ and $M_j^2\neq 1$ holding simultaneously since that will imply that $aM_ia$ and $M_j$ are both uniquely positioned, assuming $\ell(M_i)<\ell(M_j)$. Suppose that $M_i^2=1$. Let $M_i=XpX^{-1}$ and $M_j=YqZ$, with $p,q\in G_1\cup G_2$, $p^2=1$, $\ell(Y)=\ell(Z)$ and $Y\neq Z^{-1}$. We claim that exactly one of $aY$ or $Za$ is uniquely positioned. This is because if both are uniquely positioned, then there is nothing to prove. Also if neither is uniquely positioned, then $Y=Z^{-1}$. In both cases we get a contradiction.

\medskip
By symmetry we assume that $aY$ is uniquely positioned, and hence $qZaM_i$ is not. This leads to a contradiction when $\ell(Y)\geq \ell(M_i)$ since that will mean $Y=Z^{-1}$. Suppose then that $\ell(Y)< \ell(M_i)$. This implies that $M_i$ is an initial or terminal segment of $M_j$. Hence, we have that $M_j=M_iW$ or $M_j=WM_i$ for some $W\in G_1*G_2$, depending on whether $M_i$ is an initial or terminal segment of $M_j$. Note that $\ell(W)=2n$ for some integer $n>0$. Now we replace $R$ by 
$$R'= pMpN,$$
where $M=X^{-1}aX$ and $N=X^{-1}WaX$ or $N=X^{-1}aWX$. We consider first the case when $N=X^{-1}WaX$. In this case, the initial segment $X^{-1}W$ of $N$ has length $\ell(X^{-1}W)\geq \ell(X)+2$. Since $D_2(\textbf{S}_{R})=2$, $X^{-1}W$ neither involves $a$ nor $p$. It follows that $aXpXaX^{-1}p$ is uniquely positioned. Hence, $X^{-1}W$ is not uniquely positioned. The length condition on $X^{-1}W$ implies that $(X^{-1}W)^2=1$. Again since $D_2(\textbf{S}_{R})=2$, $X$ does not involve a letter of order $2$. So $W=SxS^{-1}X$, for some (possibly empty) word $S$ and some letter $x$ of order $2$. Hence $$R'= pX^{-1}aXpX^{-1}SxS^{-1}XaX.$$
Consider the cyclic subwords $W_1=S^{-1}XaXpX^{-1}aX$ and $W_2=pX^{-1}Sx$.
Clearly, $W_1^2\neq 1$ as otherwise $S$ is empty and more importantly $X^2=1$, which is a contradiction.  Also, $W_2^2\neq 1$ since $p\neq x$. In fact, it is easy to see that both $W_1$ and $W_2$ are uniquely positioned. This is a contradiction. Similar argument works when $N=X^{-1}aWX$ by replacing  $W_1$ and $W_2$ with their inverses. This completes the proof.
\end{proof}

 By combining Lemmas \ref{lems1} and \ref{lems2}, we obtain Theorem \ref{the1}.

\medskip
The remaining results in this section are consequences of results about a picture $\Gamma$ over $G$. First, we give a necessary and sufficient condition under which the word $R$ has a decomposition into a pair of uniquely positioned subwords when $D_2(\textbf{S}_{R})=1$.

\begin{lem}\label{lems4}
	Let $r$ be a cyclically reduced word which is not a proper power in the free product $G_1*G_2$ such that $D_2(\textbf{S}_{r})=1$. Then, $r$ has a decomposition into two uniquely positioned subwords if and only if $\ell(r)>2$ and there exists $r'\in \textbf{S}_{r}$ such that $r'=aXxYyX^{-1}$ with $X,Y,x,y,a\in G_1*G_2$, $\ell(Y)\geq 1$, $\ell(x)=\ell(y)=\ell(a)=1$, $x\neq y^{-1}$ and $a^2=1$.
\end{lem}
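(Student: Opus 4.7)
I would prove the two directions of the biconditional separately. The length condition $\ell(r)>2$ is immediate: if $\ell(r)=2$ then the order-$2$ letter guaranteed by $D_2(\textbf{S}_{r})=1$ appears once in $r$ and in every cyclic rotation of $r^{-1}$, and one checks directly that neither cyclic letter nor the unique two-letter cyclic subword can be uniquely positioned; conversely, any word of the stated form has length at least $4$.

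For the direction from decomposition to form, I pick an order-$2$ letter $a$ with $D(a)=D_2(\textbf{S}_{r})=1$ and pass to the cyclic conjugate $aM$ with $a\notin M$. Let $k$ be the maximal integer for which $M=X\tilde NX^{-1}$ with $\ell(X)=k$ (the maximal palindromic matching). Since $\ell(r)$ is even, $\ell(M)$ and hence $\ell(\tilde N)$ is odd, so $\ell(\tilde N)\geq 1$. If $\ell(\tilde N)\geq 3$, then writing $\tilde N=xYy$ gives $\ell(Y)\geq 1$, and maximality of $k$ forces $x\neq y^{-1}$; the desired form $aXxYyX^{-1}$ is then exhibited. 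The remaining case $\ell(\tilde N)=1$, which gives $r=aXbX^{-1}$ for a single letter $b$, must be shown incompatible with the existence of a uniquely positioned decomposition. Because $a^2=1$, the cyclic conjugate of $r^{-1}$ aligned at $a$ is exactly $aXb^{-1}X^{-1}$, so $r$ and $r^{-1}$ (viewed cyclically) agree at every position except the middle one, which carries $b$ versus $b^{-1}$. When $b^2=1$ the two cyclic words coincide, so no cyclic subword is uniquely positioned. When $b^2\neq 1$, any cyclic subword of $r$ that avoids the middle position appears verbatim in $r^{-1}$ at the same place, so cannot be uniquely positioned; since in any decomposition $r=UV$ exactly one of $U,V$ contains the middle position, the other one fails the definition, a contradiction.

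For the converse, assume $r'=aXxYyX^{-1}\in\textbf{S}_{r}$ with the stated conditions. By absorbing any matched boundary letters into $X$, we may take $X$ of maximal length, which automatically yields $x\neq y^{-1}$. I propose the split $U=aXx$, $V=YyX^{-1}$. The verification that $U$ is uniquely positioned mirrors the forward sub-claim: $D(a)=1$ pins down $U$'s starting position in $r'$, and the $a$-aligned cyclic conjugate $aXy^{-1}Y^{-1}x^{-1}X^{-1}$ of $r'^{-1}$ has $aXy^{-1}\neq aXx$ as its length-$\ell(U)$ prefix because $x\neq y^{-1}$. The main obstacle is verifying that $V=YyX^{-1}$ is also uniquely positioned. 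A second occurrence of $V$ in $r'$ at some position $s\in\{1,\ldots,\ell(X)+1\}$ would yield an equation $r'[s..s+\ell(V)-1]=V$; the cleanest case $s=1$ gives the Fine--Wilf-type relation $XxY=YyX^{-1}$, while other values of $s$ produce analogous equations. Unfolding such an equation by iteratively canceling matched $X$- and single-letter prefixes shows the recursion must bottom out either at $x=y^{-1}$, contradicting maximality of $X$, or at the palindromic identity $X=X^{-1}$ with $x=y$. The latter is impossible under $D_2(\textbf{S}_{r})=1$ because, for $\ell(X)\geq 1$ in a reduced word, $X=X^{-1}$ forces the central letter of $X$ to have order~$2$ and to occur in both $X$ and $X^{-1}$, contradicting $D=1$; and the degenerate case $\ell(X)=0$ collapses via $\ell(Y)\geq 1$ together with the reducedness of $r'$. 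Occurrences of $V$ in $r'^{-1}$ are ruled out analogously by locating the letter $y$ of $V$ in $aXy^{-1}Y^{-1}x^{-1}X^{-1}$; the few candidate positions each reduce to the same palindromic degeneracies. This combinatorial case analysis on self-overlaps of $V$ is the subtlest part of the argument; by contrast, the forward direction exploits only the single symmetry between $r$ and $r^{-1}$ at the middle letter.
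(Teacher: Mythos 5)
Your forward direction is fine and essentially the paper's own argument (anchor at the unique order-$2$ letter, take the maximal matching $M=X\tilde N X^{-1}$, and dispose of the case $\ell(\tilde N)=1$ by comparing $r$ with $r^{-1}$ at the central letter). The genuine gap is in the converse, where you set out to prove that the specific split $U=aXx$, $V=YyX^{-1}$ is always uniquely positioned. That claim is false, and your asserted dichotomy --- that any self-overlap ``bottoms out'' at $x=y^{-1}$ or at $X=X^{-1}$ with $x=y$ --- misses the degeneracy that actually occurs: a segment of $V$ (possibly all of $V$) can be equal to its own inverse, which only produces a letter of order $2$ somewhere inside $XxYyX^{-1}$ occurring \emph{once}, so neither $D_2(\textbf{S}_{r})=1$ nor $x\neq y^{-1}$ is violated. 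Concretely, let $G_1$ contain distinct elements $a,x,y$ with $a^2=y^2=1$ and $x$ of infinite order (say $G_1=C_2\times C_2\times\mathbb{Z}$), let $G_2=\langle g\rangle\cong\mathbb{Z}$, and take $r'=a\,g\,x\,g\,y\,g^{-1}=aXxYyX^{-1}$ with $X=g$, $Y=g$. All your hypotheses hold ($a^2=1$, $x\neq y^{-1}$, $\ell(Y)\geq 1$, $D_2(\textbf{S}_{r'})=1$, $r'$ not a proper power), yet $V=gyg^{-1}$ equals its own inverse and occurs in $r'^{-1}=gyg^{-1}x^{-1}g^{-1}a$, so $V$ is not uniquely positioned; note that here neither $x=y^{-1}$ nor $X=X^{-1}$ holds, so your ``unfolding'' cannot reach a contradiction. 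The lemma still holds for this word only because a \emph{different} decomposition works (e.g.\ $xg\cdot yg^{-1}ag$ is a uniquely positioned pair), and the lemma only asserts existence.

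This is precisely the step where the paper does something you do not: it first invokes Lemma \ref{lems1} (part (a) of Theorem \ref{the1}) to argue that either some uniquely positioned decomposition already exists --- in which case there is nothing left to prove --- or, after excluding the shape $aXbX^{-1}$ with $b^2=1$ by means of $x\neq y^{-1}$, the word $XxYyX^{-1}$ contains \emph{no} letter of order $2$ at all. Only under that standing assumption does the overlap analysis close: an occurrence of $V^{-1}$ meeting $V$ forces a self-inverse initial segment, hence an order-$2$ letter in $XxYyX^{-1}$; and a second occurrence of $V$ itself forces, via the periodicity-and-reflection argument on the tail $X^{-1}$, again an order-$2$ letter --- both now genuine contradictions. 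To repair your proof you must either incorporate this reduction (and then aim your case analysis at producing \emph{any} order-$2$ letter in $XxYyX^{-1}$, not the specific identities $x=y^{-1}$ or $X=X^{-1}$), or abandon the claim that the particular pair $(aXx,\,YyX^{-1})$ always works, which the example above refutes.
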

\begin{proof}
	Suppose that $r$ has a decomposition into two uniquely positioned subwords $U$ and $V$. Since $D(\textbf{S}_{r})=1$, we have that $\ell(r)>2$. Without loss of generality, it follows that a cyclic conjugate of $r$ has the form $$r'=aU_2VU_1,$$
	where $U=U_1aU_2$ and $a^2=1$. Hence $U_2VU_1=XYX^{-1}$ for some words $X,Y\in G_1*G_2$, where $X$ is possibly empty. Since $U$ and $V$ are uniquely positioned in $r$, we conclude that $\ell(Y)\geq 3$ and the first and last letters of $Y$ are not inverses. The result follows.
	
	\medskip
	For the other direction, suppose $r'=aXxYyX^{-1}$ with $X,Y,x,y,a\in G_1*G_2$, $\ell(x)=\ell(y)=\ell(a)=1$, $x\neq y^{-1}$ and $a^2=1$. Then $aXx$ is clearly uniquely positioned in $r$ since $x\neq y^{-1}$. For the same reason, we deduce from part(a) of Theorem \ref{the1} that $XxYyX^{-1}$ has no element of order $2$. In particular, this means that $YyX^{-1}$ and its inverse do not intersect (in an initial or terminal segment). We claim that this means that $YyX^{-1}$ is also uniquely positioned. We prove this by contradiction by assuming that $YyX^{-1}$ is not uniquely positioned and showing that  $XxYyX^{-1}$ contains an element of order $2$. 

\medskip
Let $XxYyX^{-1}=x_1x_2\cdots x_n,$ with $X=x_1x_2\cdots x_p$. Suppose that $YyX^{-1}$ is not uniquely positioned. Then, $(YyX^{-1})^{\pm 1}$ is identically equal to some segment of $XxYyX^{-1}$. This segment must intersect $YyX^{-1}$. By above discussion, we have that $YyX^{-1}$ is identically equal to the segment $$x_kx_{k+1}\cdots x_{\ell(YyX^{-1})-1},$$ with $k\leq p$. Hence, we have that the terminal segment of $XxYyX^{-1}$ of length $n+1-k$ has period $\lambda=p+2-k$. Consider the initial segment of this periodic segement given by
$$W_k=x_kx_{k+1}\cdots x_{n+k-(p+2)}.$$
In particular $W_k$ is of length $n-(p+1)$. Note that $X^{-1}=x^{-1}_px^{-1}_{p-1}\cdots x^{-1}_1=x_{n+1-p}x_{n+2-p}\cdots x_n$. If $x_i=x^{-1}_i$ for some $k\leq i\leq p$, then we are done. Suppose not. If $x_p$ (alternatively $x_k$) is identified with $x^{-1}_i$ for some $k\leq i\leq p$, then $x_{\frac{p+i}{2}}=x^{-1}_{\frac{p+i}{2}}$ (alternatively $x_{\frac{k+i}{2}}=x^{-1}_{\frac{k+i}{2}}$). This is a contradiction. Otherwise,  both $x_k$ and $x_p$ are identified with $x^{-1}_i$ and $x^{-1}_j$ respectively, where $1\leq j\leq i<k-1$ (since we are in a free product). In fact, $j=i+k-p<2k-1-p$.
Choose $j$ such that under this periodicity, $x^{-1}_j$ is the letter that provides the first identification with $x_p$. We claim that $j+\lambda$ lies between $k$ and $p$. To verify this claim, it is enough to show that $p\geq j+\lambda$. We have that
$j+\lambda<2k-1-p+\lambda=k+1$. Therefore, $j+\lambda\leq k\leq p$.
	Hence $x_{p}=x^{-1}_{j+\lambda}$ and $j+\lambda\leq p$. By the choice of $j$, we must have that $k\leq j+\lambda\leq p$.
	This is a contradiction. Hence $YyX^{-1}$ is uniquely positioned. This completes the proof.
	
 \end{proof}

\begin{lem}\label{lems3}
	Let $\Gamma$ be a reduced picture over $G$ on $D^2$ where $R=aXbX^{-1}$ and $a^2=b^2=1$. Then either $\Gamma$ is empty or  $\Gamma$ satisfies   $C(6)$.
\end{lem}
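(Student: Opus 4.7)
My plan is to proceed by contradiction. Suppose $\Gamma$ is a non-empty reduced picture in which some interior vertex $v$ has degree $k\leq 5$. Writing the pieces at $v$ as $p_1,\dots,p_k$, the standard accounting of corners and separators around a vertex gives
\[
\sum_{i=1}^{k} p_i \;=\; \ell(R^m)-k \;=\; 2m(\ell(X)+1)-k.
\]

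The first step is to exploit $a^2=b^2=1$: since $R^{-1}=(aXbX^{-1})^{-1}=XbX^{-1}a$ is just the cyclic shift of $R$ by one letter, $R$ and $R^{-1}$ agree as cyclic words. Consequently, for every cyclic subword $t$ of $R^m$ the inverse $t^{-1}$ is automatically a cyclic subword as well, so every cyclic subword of $R^m$ is a candidate piece and the bound on piece length must come entirely from the reducedness of $\Gamma$.

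The second step classifies candidate pieces by their interaction with the involutive letters. A cyclic subword avoiding every occurrence of $a$ and $b$ lies inside a single $X$- or $X^{-1}$-block and has length at most $\ell(X)$. A cyclic subword containing exactly one involutive letter takes, up to shift and inversion, the palindromic form $YaY^{-1}$ or $YbY^{-1}$ with $Y$ an initial segment of $X$; these reach length $2\ell(X)+1$. A cyclic subword containing two or more involutive letters would force a short period on $R$ that is inconsistent with $R$ not being a proper power.

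The third and crux step uses reducedness to bound pieces by $\ell(X)$. Suppose a zone at $v$ realises a piece $t=YaY^{-1}$ with $\ell(Y)\geq 1$, and let $u$ be the vertex at the other endpoint. The two separator corners of $v$ immediately outside the zone carry letters that would extend the palindromic pattern one step further on each side of the central $a$; because $a=a^{-1}$, the corresponding corners at $u$ must match these letters as their inverses, so the palindromic matching between $u$ and $v$ propagates by one more letter on each side. Iterating, and using $b=b^{-1}$ whenever the propagation reaches a $b$, one eventually forces the entire labels of $u$ and $v$ to agree as mutual inverses, contradicting the reducedness of $\Gamma$. The argument for a piece of the form $YbY^{-1}$ is identical. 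Hence every piece has length at most $\ell(X)$.

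Finally, with pieces bounded by $\ell(X)$, the counting identity yields
\[
k\,\ell(X) \;\geq\; \sum_{i=1}^{k} p_i \;=\; 2m(\ell(X)+1)-k,
\]
which rearranges to $k\geq 2m\geq 6$, contradicting $k\leq 5$. Therefore every interior vertex of $\Gamma$ has at least six zones, so $\Gamma$ satisfies $C(6)$.

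The main obstacle is the third step: the palindromic structure of $R$ is precisely what creates long candidate pieces, and the reducedness-driven propagation uses \emph{both} $a$ and $b$ being involutions in an essential way. This is consistent with Theorem~\ref{the2}, which singles out the case $a^2=1\neq b^2$ as exactly the one where $C(6)$ can fail. Writing the propagation argument out rigorously will require a careful case analysis on how many $X$- and $X^{-1}$-blocks the extension must cross before wrapping around both labels, and on locating the first position at which the matching between $u$ and $v$ could potentially break down.
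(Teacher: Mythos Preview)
Your approach is essentially the same as the paper's, though the paper compresses the whole thing into three sentences. The paper argues: if an interior vertex $v$ has fewer than $6\le 2m$ zones, then by pigeonhole one of the $2m$ occurrences of $a$ or $b$ in the label $R^{\pm m}$ lies inside a piece; since the local pattern around every such letter is $X^{-1}aX$ (respectively $XbX^{-1}$) on \emph{both} ends of the zone, bridge moves extend the zone all the way around to a dipole, contradicting reducedness. Your ``crux step'' is exactly this bridge-move propagation, and your final counting inequality $k(\ell(X)+1)\ge 2m(\ell(X)+1)$ is just that pigeonhole rephrased.

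Two places where your write-up overreaches. First, the claim that a piece containing two or more involutive letters ``forces a short period on $R$'' is not correct: for instance $aXb$ and its inverse $bX^{-1}a$ are both cyclic subwords of $R^m$, with no period implied. Second, a piece containing a single involutive letter need not be palindromic; it can perfectly well be $Y_1aY_2$ with $\ell(Y_1)\neq\ell(Y_2)$ (its inverse $Y_2^{-1}aY_1^{-1}$ is again a cyclic subword of $R^m$). Neither inaccuracy is fatal, because the propagation argument only needs \emph{some} occurrence of $a$ or $b$ inside the piece, not any particular shape; once you have that, the extension goes through uniformly, using $a^2=1$ and $b^2=1$ alternately as the matching crosses each block boundary. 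So you can drop the classification step entirely and pass directly from ``degree $<2m$ implies some piece contains an $a$ or a $b$'' to the bridge-move propagation.
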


\begin{proof}
	Suppose $\Gamma$ a non-empty picture over $G$ which is reduced. Suppose also that $\Gamma$ contains some interior vertex $v$ of degree less than $6$. Then $v$ is connected to another vertex $u$ by a zone containing $a$ or $b$. Using this zone, we can do bridge-moves so that $u$ and $v$ form a dipole. This contradicts the assumption that $\Gamma$ was reduced.
\end{proof}

\medskip
We obtain from Lemmas \ref{lems1}, \ref{lems4} and \ref{lems3} the following corollary.
\begin{cor}\label{cor2}
Let $D_2(\textbf{S}_{R})\leq 2$ and $R\neq aXbX^{-1}$ with $a^2=1\neq b^2$. Then any non-empty reduced picture on $D^2$ over $G$ satisfies  $C(6)$.
\end{cor}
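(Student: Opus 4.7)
The plan is to invoke Theorem \ref{the1} to split $R$ (up to cyclic conjugacy) into the structural cases it produces, and then handle each case by either citing Lemma \ref{lems3} or reducing to the Duncan--Howie setting of a uniquely positioned decomposition. Under the hypothesis $D_2(\textbf{S}_R)\leq 2$ together with the exclusion of $R=aXbX^{-1}$ with $a^2=1\neq b^2$, Theorem \ref{the1} leaves three possibilities, up to cyclic conjugacy of $R$: (A) $R$ decomposes as $UV$ with $U,V$ both uniquely positioned; (B) $D_2(\textbf{S}_R)=1$ and $R=aM$ with $a^2=1$ and $M$ involving no letter of order two; or (C) $R=aXbX^{-1}$ with $a^2=b^2=1$. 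Case (C) is immediate from Lemma \ref{lems3}, while case (A) is the original Duncan--Howie setting: a uniquely positioned decomposition is precisely the input required by the bridge-move argument of \cite{dh}, which gives $C(2m)$ and hence $C(6)$ since $m\geq 3$ by our standing assumption.

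The real work sits in case (B), and the plan is to collapse it into case (A) via Lemma \ref{lems4}. Cyclically reduced words in $G_1*G_2$ have even free-product length, so $\ell(R)=2$ would force $R=ab$ with $b$ in the opposite factor; since $M$ has no order-two letter, $b^2\neq 1$ and $R$ would fall into the excluded form. Thus $\ell(R)\geq 4$, and $\ell(M)\geq 3$ is odd. I would take $X$ to be the longest prefix of $M$ whose formal inverse $X^{-1}$ is the matching suffix, and write $M=XCX^{-1}$; parity forces $\ell(C)$ odd. If $\ell(C)=1$, then $R$ again has the excluded form; otherwise $\ell(C)\geq 3$, and maximality of $X$ forces $C=xYy$ with single letters $x\neq y^{-1}$ and $\ell(Y)\geq 1$. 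This matches the hypothesis of Lemma \ref{lems4} exactly, so $R$ has a uniquely positioned decomposition and we are reduced to case (A).

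The main obstacle is precisely this reduction of case (B): the parity of $\ell(M)$ together with the maximality of the palindromic wrapper $X$ are what guarantee that, once we escape the excluded form, the core $C$ has odd length at least $3$, so that the $\ell(Y)\geq 1$ hypothesis of Lemma \ref{lems4} is met. Everything else --- cases (A) and (C), and the final assembly --- is a direct citation of the preceding lemmas together with the standard small-cancellation machinery from \cite{dh}.
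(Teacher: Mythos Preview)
Your proposal is correct and follows essentially the same route as the paper: invoke Theorem~\ref{the1} for the case split, dispatch the $aXbX^{-1}$ case with $a^2=b^2=1$ via Lemma~\ref{lems3}, the uniquely-positioned case via the Duncan--Howie argument, and reduce the $aM$ case to a uniquely-positioned decomposition through Lemma~\ref{lems4}. The paper's proof compresses all of this into a single dichotomy (``uniquely positioned or $aXbX^{-1}$ with $a^2=b^2=1$'') and cites Lemma~\ref{lems4}; your explicit handling of case~(B) --- the parity argument on $\ell(M)$ and the maximal-$X$ trick to force $x\neq y^{-1}$ and $\ell(Y)\geq 1$ --- is exactly the computation that justifies that dichotomy, so you have simply unpacked what the paper leaves implicit.
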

\begin{proof}
By Lemma \ref{lems4}, either $R$ has a decomposition $UV$ with $U,V$ uniquely positioned in $R$ or $R$ has the form (up to cyclic conjugation) $aXbX^{-1}$ with $a^2=b^2=1$. For the first case, the proof is exactly as it is in [\cite{dh} Lemma 3.1]. For the latter case, the result follows from Lemmas \ref{lems3}.
\end{proof}

\section{APPLICATIONS}
In this section we deduce a number of applications of Theorem \ref{the2}. But first, we recall the setting. 

\medskip
Let $G_1$ and $G_2$ be non-trivial groups and $R\in G_1*G_2$ which is not a proper power and has length at least $2$. We also require that no letter  of order $2$ involved in $R$ appears more than twice i.e. $D_2(\textbf{S}_{R})\leq 2.$
For a natural number $m\geq 3$, $G$ is the quotient of $G_1*G_2$ by the normal closure of $R^m$.

\begin{pfof}{Theorem \ref{the2}}
	This follows from Part (b) or Theorem \ref{the1} and Corollary \ref{cor2}.
\end{pfof}

When $R$ has a conjugate of the form $aXbX^{-1}$ and $a^2=1\neq b^2$, we will call $R$ \textit{exceptional}. As mentioned earlier, there are results in the literature on the two classes and we list them without proof. We begin the non-exceptional case. For this case the proofs can be found in \cite{dh}.

\begin{thm}\label{the3} Suppose that $G$ is as above and $R$ is not exceptional. Then the following hold.
		\begin{itemize}
			\item[(a)] \textbf{Freiheitssatz.}
					The natural homomorphisms $G_1\rightarrow G$ and $G_2\rightarrow G$ are injective.
					\item[(b)] \textbf{Weinbaum's Theorem.}
					No non-empty proper subword of $R^m$ represents the identity element of $G$.
			\item[(c)] \textbf{Word problem.} If $G_1$ and $G_2$ are given by a recursive presentation with soluble word problem, then so is $G$. Moreover, the generalized word problem for $G_1$ and $G_2$ in $G$ is soluble with respect to these presentations.
			\item[(d)] \textbf{The Identity Theorem.} 
			$N(R^m)/[N(R^m), N(R^m)] = \mathbb{Z}G/(1-R)\mathbb{Z}G$
			as a (right) $\mathbb{Z}G$-module.
		\end{itemize}	
	\end{thm}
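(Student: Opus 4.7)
The plan is to reduce Theorem \ref{the3} directly to the corresponding statements of Duncan and Howie in \cite{dh}. Their arguments for parts (a)--(d) depend on $R$ only through one combinatorial fact about $G=(G_1*G_2)/N(R^m)$: every minimal picture over $G$ satisfies the small cancellation condition $C(6)$. Once this property is in hand, the curvature and module-theoretic arguments of \cite{dh} go through without modification, and the four conclusions follow.

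In \cite{dh} the $C(6)$ property is derived from the hypothesis that $R$ involves no letter of order $2$, via the Weinbaum--Duncan--Howie decomposition $R=UV$ into uniquely positioned cyclic subwords. Under our weaker hypothesis $D_2(\textbf{S}_{R})\le 2$, assuming $R$ is non-exceptional, the same $C(6)$ conclusion is supplied by Corollary \ref{cor2} (equivalently, by Theorem \ref{the2}). So in each of (a)--(d) I would simply invoke Corollary \ref{cor2} at precisely the step where \cite{dh} invokes the $UV$ decomposition to force $C(6)$, and the rest of each proof carries over verbatim.

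The main point to check carefully is that the proofs in \cite{dh} use the decomposition into uniquely positioned subwords \emph{only} to produce $C(6)$, and never exploit any finer information about $R$ that fails in the presence of letters of order $2$. This is a routine inspection: the Freiheitssatz and Weinbaum's theorem follow from a standard angle/curvature argument applied to a hypothetical minimal counterexample picture; parts (c) follows from the Dehn-type algorithm afforded by $C(6)$ together with recursive presentations of $G_1$ and $G_2$; and the identity theorem in (d) follows from an analysis of spherical pictures in which $C(6)$ forces $N(R^m)/[N(R^m),N(R^m)]$ to be the cyclic $\mathbb{Z}G$-module $\mathbb{Z}G/(1-R)\mathbb{Z}G$. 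None of these arguments re-uses the specific form of $R$ beyond $C(6)$, so the substitution supplied by Corollary \ref{cor2} is harmless.

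The only potential obstacle is a cosmetic one: a few intermediate lemmas in \cite{dh} are phrased in terms of the $UV$ decomposition rather than $C(6)$, and so one has to translate them. In our setting this translation is immediate because, by Lemma \ref{lems4} together with Lemma \ref{lems3}, the $C(6)$ conclusion for reduced pictures holds uniformly, whether $R$ admits the $UV$ decomposition or whether $R$ is cyclically $aXbX^{-1}$ with $a^2=b^2=1$; the exceptional case $a^2=1\ne b^2$ is precisely what is excluded by hypothesis. Hence each of (a)--(d) is established.
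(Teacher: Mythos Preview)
Your proposal is correct and matches the paper's approach: the paper itself offers no independent argument for Theorem \ref{the3} and simply states that the proofs can be found in \cite{dh}, relying on the fact that the non-exceptional case satisfies $C(6)$ by Corollary \ref{cor2}. Your write-up is in fact more explicit than the paper about why the \cite{dh} arguments transfer, but the route is the same.
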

	
	\begin{cor}
		There are natural isomorphisms for all $q > 3$;
		\begin{align*}
			H^{q}(G ; -) \longrightarrow    H^{q}(G_1 ; -) & \times H^{q}(G_2 ; -) \times H^{q}(\mathbb{Z}_m ; -),\\
			H_{q}(G ; -) \longleftarrow    H_{q}(G_1 ; -) & \oplus H^{q}(G_2 ; -) \oplus H^{q}(\mathbb{Z}_m ; -);
		\end{align*}
		a natural epimorphism
		\begin{align*}
			H^{2}(G ; -) \longrightarrow    H^{2}(G_1 ; -) & \times H^{2}(G_2 ; -) \times H^{2}(\mathbb{Z}_m ; -),
		\end{align*}
		and a natural monomorphism
		\begin{align*}
			H_{2}(G ; -) \longleftarrow    H_{2}(G_1 ; -) & \oplus H_{2}(G_2 ; -) \oplus H_{2}(\mathbb{Z}_m ; -).
		\end{align*}
	\end{cor}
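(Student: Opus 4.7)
The plan is to obtain this (co)homological decomposition from two applications of the Mayer–Vietoris sequence for an amalgamated product (pushout) of groups, using Theorem~\ref{the3} to guarantee the hypotheses.

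First I would observe that, because $R$ has free-product length at least two and is not a proper power, the cyclic subgroup $\langle R\rangle\leq G_1*G_2$ is infinite cyclic. Theorem~\ref{the3}(a),(b) then implies that the image of $R$ in $G$ has order exactly $m$: the Freiheitssatz together with Weinbaum's theorem prevent any proper power $R^{k}$, $0<k<m$, from being trivial in $G$, while $R^m=1$ by definition. Consequently, $G$ is naturally the pushout
\begin{equation*}
\mathbb{Z}_m\;\longleftarrow\;\mathbb{Z}\;\xrightarrow{\;R\;}\;G_1*G_2,
\end{equation*}
i.e.\ $G\cong (G_1*G_2)*_{\langle R\rangle}\mathbb{Z}_m$. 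The associated Mayer–Vietoris long exact sequence reads
\begin{equation*}
\cdots\to H_q(\mathbb{Z};-)\to H_q(G_1*G_2;-)\oplus H_q(\mathbb{Z}_m;-)\to H_q(G;-)\to H_{q-1}(\mathbb{Z};-)\to\cdots,
\end{equation*}
with the analogous sequence in cohomology. Since $\mathbb{Z}$ has cohomological dimension one, the flanking terms vanish for $q\geq 3$, producing isomorphisms $H_q(G;-)\cong H_q(G_1*G_2;-)\oplus H_q(\mathbb{Z}_m;-)$ (and dually in cohomology); for $q=2$ one loses only a surjectivity/injectivity at one end of the sequence, which yields the epi/mono part of the statement.

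Next I would apply Mayer–Vietoris a second time, this time to the trivial pushout expressing the free product, $G_1*G_2=G_1*_{\{1\}}G_2$. Because $H_q(\{1\};-)=0$ for all $q\geq 1$, the sequence immediately gives $H_q(G_1*G_2;-)\cong H_q(G_1;-)\oplus H_q(G_2;-)$ for $q\geq 2$, and dually in cohomology. Substituting this into the isomorphism from the previous paragraph yields precisely the displayed three-term decompositions for $q\geq 3$ and the natural epimorphism/monomorphism in degree $2$. Naturality in the coefficient module is built into the Mayer–Vietoris sequence, so no extra work is required there.

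The only non-routine input is the verification that $R$ has order exactly $m$ in $G$, and this is the step I would flag as the main potential obstacle, since it relies essentially on Theorem~\ref{the3}. Once that is in hand, the rest is a formal assembly of two standard long exact sequences, and I expect no further difficulty. This is the proof given in \cite{dh} in the case $D_2(\mathbf{S}_R)=0$, and under our hypothesis that $R$ is non-exceptional the argument goes through verbatim via Theorem~\ref{the2} and Theorem~\ref{the3}.
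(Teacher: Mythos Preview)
Your overall shape is right, and the paper itself gives no proof here beyond pointing to \cite{dh}, where the corollary is deduced from the Identity Theorem.  But there is a genuine gap in your argument at the point where you invoke Mayer--Vietoris for the square
\[
\begin{array}{ccc}
\mathbb{Z} & \longrightarrow & \mathbb{Z}_m \\[2pt]
\downarrow & & \downarrow \\[2pt]
G_1*G_2 & \longrightarrow & G.
\end{array}
\]
This is indeed a pushout of groups, but it is \emph{not} an amalgamated free product: the map $\mathbb{Z}\to\mathbb{Z}_m$ is not injective.  For such a square the long exact Mayer--Vietoris sequence in (co)homology is not automatic; it holds precisely when the square is ``geometrically Mayer--Vietoris'', i.e.\ when the homotopy pushout $B(G_1*G_2)\cup_{B\mathbb{Z}}B\mathbb{Z}_m$ is a model for $BG$.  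Establishing that is exactly the content of the Identity Theorem, Theorem~\ref{the3}(d): the relation module $N(R^m)/[N(R^m),N(R^m)]\cong \mathbb{Z}G/(1-R)\mathbb{Z}G$ is what forces the $\pi_2$ of the attached $2$--complex to have the expected form.  Knowing only that $R$ has order exactly $m$ in $G$ (your use of parts (a) and (b)) tells you that $\mathbb{Z}_m$ embeds in $G$, but that alone does not give the exact sequence.

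So the fix is simply to cite Theorem~\ref{the3}(d) rather than (a),(b) as the crucial input; once the Identity Theorem is in hand, one can either run your Mayer--Vietoris argument or, equivalently, build the Lyndon-type partial resolution as in \cite{dh}.  Your second application of Mayer--Vietoris, to the genuine amalgam $G_1*_{\{1\}}G_2$, is unproblematic.
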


These are defined on the category of $\mathbb{Z}G$-modules, $\mathbb{Z}_m$ is the cyclic subgroup of order $m$ generated by $R$, and all these maps are induced by restriction on each
factor.

\medskip
Next we consider the exceptional case. In this case, $G$ is  called a one-relator product induced from the generalized triangle group $H$, described as follows. Let $A:=\<a\>$ and $X^{-1}BX:=\<b\>$ be the cyclic subgroups of $G_1$ or $G_2$ generated by $a$ and $b$ respectively. Then $H:=(A*B)/N(R^m)$. Note that $G$ can be realized as
a push-out of groups as shown in Figure \ref{push-out}. 

\begin{figure}[ht]
	\begin{tikzpicture}[>=latex]
	\centering
	\node (w) at (0,0) {\(A*B\)};
	\node (x) at (0,-2) {\(G_1 * G_2\)};
	\node (y) at (2.4,0) {\(H\)};
	\node (z) at (2.4,-2) {\(G\)};
	\draw[->] (w) -- (y);
	\draw[->] (w) -- (x);
	\draw[->] (x) -- (z);
	\draw[->] (y) -- (z);
	\end{tikzpicture}
	\caption{\textit{Push-out diagram.}}\label{push-out}
\end{figure}

This pushout representation of $G$ is referred to  as a generalized triangle group description of $G$,
and we require it to be \textit{maximal} in the sense \cite{Ih1}. Another technical requirement is  that $(a, b)$ be \textit{admissible}: whenever both $a$ and $b$ belong to same
factor, say $G_1$, then either the subgroup of $G_1$ generated by $\{a,b\}$ is cyclic or  $\<a\>\cap\<b\>=1$. It is very easy to verify that these conditions are satisfied in our setting. Hence the results in \cite{hs} hold.

\begin{thm}\label{the11} \label{the4} Suppose that $G$ is as above and $R$ is exceptional. Then the following hold.
		\begin{itemize}
		\item[(a)]\textbf{Freiheitssatz.} The natural homomorphisms $G_1\rightarrow G$ and $G_2\rightarrow G$ are injective.
			\item[(b)] \textbf{Weinbaum's Theorem.}
			No non-empty proper subword of $R^m$ represents the identity element of $G$.
			\item[(c)] \textbf{Membership problem.} Assume that the membership problems for $\langle a\rangle$ and $\langle b \rangle$ in $G_1*G_2$ are
			solvable.  Then the  word problem for $G$ is also soluble.
			\item[(d)] \textbf{Mayer-Vietoris.} The pushout of groups in Figure \ref{push-out}  is {\em geometrically
				Mayer-Vietoris} in the sense of \cite{hs}.  In particular it
			gives rise to Mayer-Vietoris sequences
			$$\cdots \to H_{k+1}(G,M)\to H_k(A*B,M)\to$$ $$ H_k(G_1*G_2,M)\oplus H_k(H,M)\to H_k(G,M)\to\cdots  $$
			and $$\cdots \to H^k(G,M)\to H^k(G_1*G_2,M)\oplus H^k(H,M)$$
			$$\to H^k(A*B,M)\to H^{k+1}(G,M)\to\cdots $$ for any $\mathbb{Z}G$-module $M$.	
		\end{itemize}
\end{thm}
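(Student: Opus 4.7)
The plan is to identify $G$ as a one-relator product induced from the generalized triangle group $H := (A*B)/N(R^m)$, where $A := \langle a\rangle$ and $B := X^{-1}\langle b\rangle X$, and then reduce each of parts (a)--(d) to the corresponding theorem of \cite{hs} via the push-out diagram of Figure \ref{push-out}. The bulk of the work is verifying the two technical hypotheses that \cite{hs} imposes: that the triangle-group description is \emph{maximal}, and that the pair $(a,b)$ is \emph{admissible}.

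For admissibility, the case in which $a$ and $b$ lie in different free factors is automatic, so assume $a,b\in G_1$. One must show either $\langle a,b\rangle$ is cyclic in $G_1$ or $\langle a\rangle\cap\langle b\rangle=1$. Since $a^2=1\neq b^2$, any nontrivial element of $\langle a\rangle\cap\langle b\rangle$ must equal $a$ and simultaneously be a power of $b$; this forces $b$ to have even order with $a$ its unique involution, so $\langle a,b\rangle = \langle b\rangle$ is cyclic. If no such coincidence occurs, the intersection is trivial, so admissibility holds in either sub-case.

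For maximality, one needs that $H$ cannot be replaced by a push-out with strictly smaller cyclic factors producing the same quotient $G$. This reduces to checking that $R$, viewed inside $A*B$, is not a proper power and that the cyclic orders of $a$ and $b$ are genuinely the correct ones; both follow from the hypotheses that $R$ is cyclically reduced, not a proper power, and satisfies $D_2(\textbf{S}_R)\leq 2$, so that no alternative realisation of the exceptional form $aXbX^{-1}$ allows one to shrink the push-out further. With admissibility and maximality confirmed, statements (a), (b), (c), and (d) are immediate translations of the Freiheitssatz, Weinbaum-type theorem, conditional word-problem solvability, and geometric Mayer--Vietoris sequences of \cite{hs}. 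The main obstacle is precisely the maximality check: it is the one step where the exceptional structure of $R$ and the constraint $D_2(\textbf{S}_R)\leq 2$ actively intervene, and it is what licenses calling $H$ a generalized triangle group description of $G$ in the first place.
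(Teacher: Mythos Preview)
Your approach is essentially identical to the paper's: the paper simply observes that $G$ fits into the push-out of Figure~\ref{push-out} as a one-relator product induced from the generalized triangle group $H=(A*B)/N(R^m)$, asserts that maximality (in the sense of \cite{Ih1}) and admissibility of $(a,b)$ are ``very easy to verify'' in this setting, and then quotes \cite{hs} for (a)--(d). You supply more detail than the paper does, particularly the clean admissibility argument; one small slip is the direction of conjugation in your definition of $B$ --- to have $R=a(XbX^{-1})\in A*B$ you want $B=X\langle b\rangle X^{-1}$ (equivalently $X^{-1}BX=\langle b\rangle$), not $B=X^{-1}\langle b\rangle X$.
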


\end{document}